\numberwithin{equation}{section}
\newtheorem{theorem}{Theorem}[section]
\newtheorem{proposition}[subsection]{\bf Proposition}
\newtheorem{lemma}[subsection]{{\bf Lemma}}
\newtheorem{remark}[subsection]{Remark}
\newcommand{\al}{\alpha}
\newcommand{\la}{\lambda}
\newcommand{\Z}{\mbox{$\mathbb Z$}}
\newcommand{\Q}{\mbox{$\mathbb Q$}}
\newcommand{\N}{\mbox{$\mathbb N$}}     % For Natural numbers
\begin{document}

	\title[Recurrence sequences and norm form equations ]{Sum of terms of recurrence sequences and $S$-units in the solution sets of norm form equations} %\\ \today}

\author[Darsana]{Darsana N}
\address{Darsana N, Department of Mathematics, National Institute of Technology Calicut, 
	Kozhikode-673 601, India.}
\email{darsana\_p230059ma@nitc.ac.in; darsanasrinarayan@gmail.com}

\author[Rout]{S. S. Rout}
\address{Sudhansu Sekhar Rout, Department of Mathematics, National Institute of Technology Calicut, 
	Kozhikode-673 601, India.}
\email{sudhansu@nitc.ac.in; lbs.sudhansu@gmail.com}

\thanks{2020 Mathematics Subject Classification: Primary 11B37, Secondary 11D61, 11D57. \\
	Keywords: Norm form equations, Linear recurrence sequences, $S$-unit equations}

\begin{abstract}
	In this paper, we prove two results related to the solutions of norm form equations. Firstly, we give a finiteness result for sums of terms of linear recurrence sequences appearing in the coordinates of solutions of norm form equations. Next, we give a finiteness result concerning solutions of norm form equations representable as sums of $S$-units with a fixed number of terms. To prove these results, we use a deep results concerning the finiteness of the solutions of polynomial-exponential equations and $S$-unit equations.
\end{abstract}

\maketitle
\pagenumbering{arabic}
\pagestyle{headings}

\section{Introduction}
Let $K$ be an algebraic number field and let $\beta_1,\ldots,\beta_k\in K$ be linearly independent over $\mathbb{Q}$. Let $D\in \Z$ be the common denominator of $\beta_1,\ldots,\beta_k$. Setting $\alpha_i = D\beta_i$ for $i=1, \ldots, k$, we can say that $\alpha_1, \ldots, \alpha_k$ are algebraic integers of $K$. Let $m$ be a non-zero integer. Consider the norm form equation
\begin{equation}\label{normeq}
	N_{K/\Q}(x_1\beta_1+\cdots+x_k\beta_k)=m
\end{equation}
in integers $x_1,\ldots,x_k$. We write $X_i$ for the coordinate sets of solutions of \eqref{normeq} with $i=1,\ldots,k$. In this paper, we are interested in studying $S$-units and terms of recurrence sequences among the solutions of norm form equations. Schmidt proved that if $K$ is different from an imaginary quadratic field and  $\Q$, then \eqref{normeq} can have infinitely many solution for appropriate $m$ (see \cite{schmidt} or \cite[Chapter VII]{schmidt1980}).  Norm form equations have been studied extensively by several authors. 

One can observe that Pell type equation is a special case of norm form equation as follows. Let $d$ be a non square integer and let $1, \sqrt{d}$ be linearly independent elements of $\Q(\sqrt{d})$, then \eqref{normeq} becomes $x^2- dy^2 =m$.  There are several papers in the literature concerning terms of recurrence sequences and $S$-units occurring in the solution sets of (generalized) Pell equations (see for instance \cite{birlucatogbe2018, lucatogbe2018, egl2020, egl2021, luca 2017}). Hajdu and Sebeste\'en in \cite{hs}, studied about sums of $S$-units in the solution sets of generalized Pell equations. The same authors in \cite{hs2022} completely describe those recurrence sequences which have infinitely many terms in either of the $x, y$ coordinates of the solution sets of generalized Pell equations.

The similar type of questions are also studied in more general set up, that is, in coordinates of the solution sets of norm  form equation. In particular, Fuchs and Heintze \cite{fh2021} studied the Diophantine equation involving terms of multi-recurrences  and coordinates of solutions of norm form equations. Indeed, they proved that there are finitely many terms of the simple multi recurrences among the co-ordinates of solutions of norm form equation. In \cite{hs2024}, Hajdu and Sebeste\'en proved under certain assumptions that, an arbitrary linear recurrence sequence can have only finitely many terms in the coordinates of solutions of norm form equations.

In this paper, we study two problems in connection with the solution sets of norm form equations. At first we will study a problem related to sum of terms of recurrence sequences  in the solution sets of norm form equations. 
Then we prove a finiteness result on sums of $S$-units in the solution sets of norm form equation. This will be a generalization of the earlier result \cite[Theorem 2.1]{hs}.

\section{Notation and Main Result}
Let $a_1,\dots, a_d \in \Z$ with $a_d\neq 0$ and $U_0,\dots,U_{d-1}$ are integers not all zero and let $(U_{n})_{n \geq 0}$ be a $d$-th order linear recurrence sequence defined by
\begin{equation}\label{eq4}
	U_{n} = a_1U_{n-1} + \dots +a_dU_{n-d}.
\end{equation}
Let $\alpha_1,\dots,\alpha_q$ be the distinct roots of the corresponding characteristic polynomial  
\begin{equation}\label{eq5}
	f(x):= x^d - a_1x^{d-1}-\dots-a_d.
\end{equation} 
Then for $n\geq 0$, we have 
\begin{equation}\label{eq6}
	U_n=f_1(n)\alpha_1^n +\cdots + f_q(n)\alpha_{q}^n,
\end{equation}
where $f_i(n)$ are nonzero polynomials with degree less than the multiplicity of $\alpha_i$; the coefficients of $f_i(n)$ are elements of the field $\Q( \alpha_1, \ldots, \alpha_q)$ (see \cite[Theorem C.1]{tnshorey}). 

The sequence $(U_n)_{n \geq 0}$ is called {\it degenerate} if there are integers $i, j$ with $1\leq i< j\leq q$ such that $\alpha_i/\alpha_j$ is a root of unity; otherwise it is called {\it non-degenerate}. The sequence $(U_n)_{n \geq 0}$ is called {\it simple} if $q=d$. In this case, \eqref{eq6} becomes
\begin{equation}\label{eqa}
	U_n=f_1\alpha_1^n +\cdots + f_d\alpha_{d}^n,
\end{equation}
where $\eta_i$'s are constants. There is a natural generalization of linear recurrence sequences, which is known as multi-recurrences. It is given as
\begin{equation}\label{mrec}
	F(n_1,\ldots,n_s)=\sum_{i=1}^{r}P_{i}(n_1,\ldots,n_s)\alpha_{i1}^{n_1}\ldots\alpha_{is}^{n_s}
\end{equation}
where $s$ and $r$ are positive integers, $P_1,\ldots, P_r$ are polynomials in $s$ variables defined over $K$, and $n_1,\ldots,n_s$ are non-negative integers. Such polynomial-exponential functions $F:\mathbb{N}_{0}^{s}\rightarrow K$ are called multi recurrences, where $\mathbb{N}_{0}$ is the set of non-negative integers. We say that $F$ is defined over a field $K$ if the coefficients and the bases $\alpha_{i1},\ldots, \alpha_{is}$ for $i=1,\ldots,r$ are in $K$. If $F$ is defined over $K$, then it takes values in $K$. 
\par 
Consider the number field $\mathbb{Q}(\alpha_{11},\ldots,\alpha_{1s},\ldots,\alpha_{r1},\ldots,\alpha_{rs})$. We shall use the abbreviation $\boldsymbol{\alpha}_i^{{\bf n}} = \alpha_{i1}^{n_1}\cdots \alpha_{is}^{n_s}$ and hence \[P_{i}(\textbf{n})\boldsymbol{\alpha}_{i}^{\textbf{n}}= P_{i}(n_1,\ldots,n_s)\alpha_{i1}^{n_1}\ldots\alpha_{is}^{n_s}.\]
We say that $F$ is {\it degenerate} if there are integers $i, j$ with $1\leq i< j\leq r$ such that $\boldsymbol{\alpha}_{i}^{\textbf{n}}=\boldsymbol{\alpha}_{j}^{\textbf{n}}$ for some $\textbf{n}\in \mathbb{N}_0^{s}$; otherwise it is called {\it non-degenerate}.
We define $\boldsymbol{\alpha}_{i}$ is irrational if $\alpha_{ij}$ are irrational for all $j=1,\ldots, s$. 

Now we give our first result about the the Diophantine equation
 \begin{equation}\label{eq6a}
		U_{n_1} + U_{n_2} \in X_1 \cup \cdots\cup X_k
	\end{equation} 
 in $(n_1, n_2)\in \N_{0}^2$.
 
\begin{theorem}\label{thm2}
	Let $K$ be an algebraic number field of degree $k$ and $(U_n)$ be a  non-degenerate linear recurrence sequence
	of integers of order $d$ as in \eqref{eq6}. Assume that the roots of the characteristic polynomial $\alpha_i, (1\leq i\leq q)$ are pairwise multiplicatively independent and none of the root is a root of unity. 
If $a_d\neq \pm 1$ in \eqref{eq4}, then \eqref{eq6a} holds only for finitely many tuples $(n_1, n_2)$. Further, the number of such tuples $(n_1, n_2)$ is bounded by $C_1 = C_1(m, k, d)$, where $C_1$ is an effectively computable constant depending only on $m, k$ and, $ d$.
\end{theorem}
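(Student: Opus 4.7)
My plan is to reduce \eqref{eq6a} to a polynomial-exponential equation in $S$-units and then apply a quantitative finiteness theorem of Evertse--Schlickewei--Schmidt type, in the spirit of \cite{fh2021} and \cite{hs2024}.

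First, I would parameterize the norm form solutions. Writing $\xi = x_1\beta_1+\cdots+x_k\beta_k$, the condition $N_{K/\Q}(\xi)=m$ together with the finiteness of ideal classes and Dirichlet's unit theorem yields a representation
\begin{equation*}
\xi \;=\; \gamma_t \cdot \zeta \cdot \eta_1^{b_1}\cdots \eta_r^{b_r},
\end{equation*}
where $\gamma_1,\ldots,\gamma_T \in K$ is a finite list depending only on $m$ and $K$, $\zeta$ ranges over the finite torsion subgroup of $O_K^\times$, $\eta_1,\ldots,\eta_r$ is a fixed system of fundamental units, and $(b_1,\ldots,b_r)\in\Z^r$. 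Letting $\sigma_1,\ldots,\sigma_k$ denote the embeddings $K\hookrightarrow \overline{\mathbb{Q}}$ and inverting the non-singular matrix $(\sigma_l(\beta_h))_{l,h}$, each coordinate can be expressed as a fixed $\Q$-linear combination $x_h=\sum_{l=1}^{k} c_{hl}\,\sigma_l(\xi)$ of the conjugates of $\xi$.

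Second, substituting the Binet formula \eqref{eq6} for $U_n$ and the parameterization above, the relation $U_{n_1}+U_{n_2}=x_h$ for some $h\in\{1,\ldots,k\}$ becomes
\begin{equation*}
\sum_{j=1}^{q} f_j(n_1)\alpha_j^{n_1} \;+\; \sum_{j=1}^{q} f_j(n_2)\alpha_j^{n_2} \;=\; \sum_{l=1}^{k} c_{hl}\,\sigma_l(\gamma_t\zeta)\prod_{s=1}^{r}\sigma_l(\eta_s)^{b_s}
\end{equation*}
in the unknowns $(n_1,n_2,b_1,\ldots,b_r)\in\N_{0}^{2}\times\Z^r$. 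Since the index $h$, the element $\gamma_t$ and the root of unity $\zeta$ range over finite sets, it suffices to bound the solutions for each such choice. Moving everything to one side produces a vanishing polynomial-exponential sum whose bases lie in a fixed finitely generated multiplicative group, placing us in the setting of $S$-unit equations with polynomial coefficients.

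Finally, I would invoke a quantitative theorem on polynomial-exponential $S$-unit equations (of Evertse--Schlickewei--Schmidt type, as used in \cite{fh2021,hs2024}) to bound the number of non-degenerate solutions by an effectively computable constant $C_1(m,k,d)$. The main obstacle is the usual one of excluding degenerate solutions, i.e.\ tuples for which a proper sub-sum of the displayed equation vanishes identically on an infinite family. The pairwise multiplicative independence of $\alpha_1,\ldots,\alpha_q$ and the assumption that none of them is a root of unity forbid multiplicative relations among the recurrence terms $f_j(n_1)\alpha_j^{n_1}$ and $f_j(n_2)\alpha_j^{n_2}$. The hypothesis $a_d\neq\pm 1$ is the decisive ingredient: since $\pm a_d$ equals the product of the $\alpha_j$'s taken with multiplicities, at least one $\alpha_j$ fails to be a unit of $K$, and consequently no power $\alpha_j^{n}$ can be absorbed into the unit product $\prod_s \sigma_l(\eta_s)^{b_s}$ on the right-hand side, which would otherwise be the only source of infinite families. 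Once this is excluded, the quantitative theorem forces $n_1,n_2$ (and the $b_s$) to lie in a finite effectively bounded set, yielding the claimed bound $C_1(m,k,d)$.
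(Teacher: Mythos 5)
Your overall strategy coincides with the paper's: parameterize the solutions of \eqref{normeq} by a finite set of pairwise non-associate elements of norm $m$ times a product of powers of fundamental units (this is Proposition \ref{lem normeqsol}), substitute the Binet form \eqref{eq6} for $U_{n_1}+U_{n_2}$, and bound the resulting polynomial--exponential equation by the quantitative Schlickewei--Schmidt theorem (Lemma \ref{gp}), using $a_d\neq\pm1$ to prevent a root $\alpha_j$ from being multiplicatively tied to the unit terms coming from the norm form. That part of your sketch matches the paper's cases (I) and (II).

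There is, however, a genuine gap in your treatment of the degenerate (vanishing-subsum) solutions. Lemma \ref{gp} only counts solutions in $S(\pi)$ for partitions $\pi$ with $G(\pi)=\{0\}$, and the partition class pairing the term $f_i(n_1)\alpha_i^{n_1}$ from $U_{n_1}$ with the \emph{same-indexed} term $f_i(n_2)\alpha_i^{n_2}$ from $U_{n_2}$ yields only the condition $\alpha_i^{z_{r+1}}=\alpha_i^{z_{r+2}}$, so the associated group contains the direction $(0,\ldots,0,1,1)$ and is not trivial; the quantitative theorem does not apply there. Your claim that pairwise multiplicative independence and the absence of roots of unity ``forbid multiplicative relations among the recurrence terms'' does not cover this diagonal case: pairwise independence concerns distinct roots, and ``no root of unity'' rules out $f_i(n_1)\alpha_i^{n_1}+f_i(n_2)\alpha_i^{n_2}=0$ only when $f_i$ is constant (where it reduces to $\alpha_i^{n_1-n_2}=-1$); since the recurrence is not assumed simple, $f_i$ may be non-constant and a separate argument is required. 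The paper closes exactly this case with Lemma \ref{lem3.4}, the result of Peth\H{o} and Tichy on equations of the form \eqref{eq1lem5}. A smaller imprecision: $a_d\neq\pm1$ only guarantees that the product of the roots is a non-unit, hence that \emph{some} $\alpha_j$ is a non-unit, whereas the absorption argument needs the particular root occurring in the relevant partition class to be a non-unit; you should address this for every root (the paper's own write-up is terse on the same point).
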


Let $L$ be an algebraic  number field, and let $S =\{\mathcal{P}_1,\ldots, \mathcal{P}_l\}$ be a finite set of prime ideals in $L$. Denote $U_{S}$ for the set of $S$-units in $L$, that is, for the set of $w\in L$ for which the principal fractional ideal $(w)$ can be represented as
\[(w) = \mathcal{P}_{1}^{b_1}\cdots \mathcal{P}_{l}^{b_l}\quad (b_1, \ldots, b_l\in \Z).\]

In next result, we study about sums of $S$-units in the solution sets of norm form equations. In particular, we study the Diophantine equation
 \begin{equation}\label{snormeq}
	w_1+\cdots+w_t \in X_1 \cup \cdots \cup X_k
\end{equation}
in $(w_1,\ldots,w_t)\in U_{S}^{t}$, for some $t\in \mathbb{Z}_{\geq 0}$.
 \begin{theorem}\label{thm1}
 	Let $U_S$ be the set of $S$-units as defined above
  and let $t\geq 1$. If 
\begin{equation}\label{nonzerow}
	w_{i_1}+\cdots+w_{i_j}\neq 0
\end{equation}
for any $0<j\leq t$ and $1\leq i_1<\cdots<i_j\leq t$, then \eqref{snormeq} holds for finitely many tuples $(w_1,\ldots,w_t)\in U_{S}^{t}$. In fact, the number of such tuples is bounded by $C_2$,  which is an effectively computable constant depending only on $m, t$ and $l$.
 \end{theorem}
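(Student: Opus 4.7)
The plan is to convert \eqref{snormeq} into a finite collection of multi-term $S$-unit equations and then apply the quantitative Evertse--van der Poorten--Schlickewei theorem on $S$-unit equations. Let $\sigma_1=\mathrm{id},\sigma_2,\ldots,\sigma_{[K:\Q]}$ denote the embeddings of $K$ into $\ol{\Q}$ and set $\mu(x)=x_1\be_1+\cdots+x_k\be_k$, so that every solution of \eqref{normeq} yields an element of $D^{-1}\mathcal{O}_K$ of norm $m$. Combining the finiteness of (integral) ideals of bounded norm with Dirichlet's unit theorem produces an effectively computable finite set $\mathcal{F}\subset K$, depending only on $K$, $\be_1,\ldots,\be_k$ and $m$, such that every such $\mu(x)$ factors as $\mu(x)=\mu_0\,\e$ with $\mu_0\in\mathcal{F}$ and some unit $\e\in\mathcal{O}_K^{\times}$.

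Since $\be_1,\ldots,\be_k$ are $\Q$-linearly independent, I can select $k$ embeddings $\sigma_{j_1},\ldots,\sigma_{j_k}$ making the matrix $\bigl(\sigma_{j_a}(\be_l)\bigr)_{a,l}$ invertible, and its inverse expresses each coordinate as $x_i=\sum_{a=1}^{k}c_{i,a}\sigma_{j_a}(\mu(x))$ with fixed $c_{i,a}\in\ol{\Q}$. Substituting the orbit decomposition gives
\[
x_i=\sum_{a=1}^{k}d_a\,\sigma_{j_a}(\e),\qquad d_a=c_{i,a}\,\sigma_{j_a}(\mu_0),
\]
where $(d_1,\ldots,d_k)$ ranges over a finite set as $(i,\mu_0)$ varies. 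Passing to the compositum $L'$ of $L$ with the Galois closure of $K$ and enlarging $S$ to a finite set $S'$ of primes of $L'$ containing all primes of $L'$ above $S$ together with the supports of all coefficients $d_a$, both $w_1,\ldots,w_t$ and $d_a\sigma_{j_a}(\e)$ lie in $U_{S'}$, and \eqref{snormeq} becomes the $(t+k)$-term $S'$-unit equation
\[
w_1+w_2+\cdots+w_t-d_1\sigma_{j_1}(\e)-d_2\sigma_{j_2}(\e)-\cdots-d_k\sigma_{j_k}(\e)=0.
\]

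The quantitative theorem on $S$-unit equations now gives an effective upper bound, depending only on $|S'|$ and $t+k$, for the number of \emph{nondegenerate} $S'$-unit solutions of this equation. Hypothesis \eqref{nonzerow} excludes every vanishing subsum supported entirely on the $w_i$; the degenerate cases that remain (in which a subsum involving at least one $\sigma_{j_a}(\e)$ also vanishes) yield strictly shorter $S'$-unit equations in the same variables, to which the same argument applies inductively on the total number of terms. Summing the resulting bounds over the finite index set $(i,\mu_0)\in\{1,\ldots,k\}\times\mathcal{F}$, and noting that $|S'|$, $|\mathcal{F}|$ and the coefficient set $\{d_a\}$ are controlled purely by $m$ and $l$ once $K$ and the $\be_i$ are fixed, yields the desired effective constant $C_2=C_2(m,t,l)$.

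The main obstacle I anticipate lies in the degenerate-solution bookkeeping: one has to traverse the partition lattice of $\{1,\ldots,t+k\}$ into vanishing blocks while preserving the nonvanishing of every $w$-subsum forced by \eqref{nonzerow}, verifying at each level either that at least three terms survive (so the $S$-unit theorem applies nontrivially) or that the residual binary $S'$-unit equation reduces to an explicitly bounded finite set. A secondary and more routine task is to make the dependence of $\mathcal{F}$, $L'$, $S'$ and $\{d_a\}$ on $m$ fully effective, which follows from the classical effectivity of both the finite set of elements of $\mathcal{O}_K$ of given norm (up to units) and a fundamental system of units of $\mathcal{O}_K$.
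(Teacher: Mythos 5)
Your route is genuinely different from the paper's: the paper first parameterizes the coordinates $x_i$ by multi-recurrences (Proposition~\ref{lem normeqsol}), uses the Schlickewei--Schmidt theorem on polynomial--exponential equations (via Proposition~\ref{prop1}) to bound the number of possible \emph{values} of $x_i=w_1+\cdots+w_t$, and only then applies the Evertse--Gy\"ory count (Proposition~\ref{lem uniteq}) to the inhomogeneous equation $b_1w_1+\cdots+b_tw_t=1$ with $b_q=1/x_i$ fixed. You instead absorb the unit $\e$ and its conjugates into a single homogeneous $(t+k)$-term $S'$-unit equation. That is a legitimate alternative strategy in principle, but as written it has a gap.

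The gap is the passage from the $S$-unit theorem to a bound on \emph{tuples}. For a homogeneous equation $u_1+\cdots+u_n=0$ the quantitative theorem bounds only the number of nondegenerate solutions up to common proportionality; the literal "number of nondegenerate $S'$-unit solutions" of your equation is infinite, since one may scale every term by an arbitrary $S'$-unit. To recover the $w_i$ you must normalize (say, divide by $d_k\sigma_{j_k}(\e)$), conclude that all ratios take finitely many values, and then use that the terms $\sigma_{j_a}(\e)$ are conjugates of a \emph{single} unit: once all ratios $\sigma_{j_a}(\e)/\sigma_{j_k}(\e)$ are prescribed, $\e$ is determined up to a root of unity, which then pins down each $w_i$. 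This conjugate-linking step is the actual content of your approach and it is missing. It matters most in exactly the degenerate cases you defer: if a vanishing subsum isolates some conjugate terms from the block containing the $w_i$ (possible once $k\geq 3$), the shorter equations only fix ratios within each block and no longer tie $\sigma_{j_1}(\e)$ to the remaining conjugates, so induction on the number of terms does not by itself bound the number of admissible $\e$; one needs an extra argument (e.g., that a unit of $K$ with $k-1$ equal conjugates lies in a subfield of degree at most $k/(k-1)$, hence equals $\pm1$). Hypothesis \eqref{nonzerow} does guarantee that every vanishing block contains at least one conjugate term, which is the right starting point, but the bookkeeping you label "the main obstacle" is where the proof actually lives, and it is not carried out; the paper's two-step argument sidesteps the issue entirely by first fixing the value of the sum.
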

 
Below we have shown that there are infinitely many indices $(n_1, n_2)$ of \eqref{eq6a} if any of the assumptions in Theorem \ref{thm2} is not satisfied.  
\begin{remark}
Let $K=\mathbb{Q}(\sqrt{13})$ and consider the norm form equation 
\begin{equation}\label{eqrem1}
	N_{K/\Q}(x_1+\sqrt{13}x_2)=4.
\end{equation}
 Then $$X_1=\{11,119,1298,14159,154451,1684802,\ldots
\}$$ and $$X_2=\{3,33,360,3927,42837,467280,\ldots\}.$$ It is well known that every $3^{rd}$ multiple term  in the sets $X_1$ and $X_2$ are even numbers. Now consider the sequence $(U_n)$ defined by $U_0=0,U_1=2$ and $U_{n+2}=2U_{n+1}-U_n(n\geq 0)$. Thus, $(U_n)$ is exactly the sequence of even numbers, so that $U_{n_1}+U_{n_2}\in X_1\cup X_2$ for infinitely many $(n_1,n_2)$. Here, observe that $f(x)=x^2-2x+1$ is the characteristic polynomial of $(U_n)$ whose roots are root of unity.
\end{remark}

\begin{remark}
Let $\beta=\sqrt{5}$ and $K=\mathbb{Q}(\beta)$. Then $1,\sqrt{5}$ is an integral basis of $K$. Consider the norm form equation 
\begin{equation}\label{eqrem2}
	N_{K/\Q}(x_1+x_2\sqrt{5})=4.
\end{equation}
One can easily obtain the  solution for $x_1$ is $3,7,18,\ldots$. Let $U_0=0, U_1=3$, and $U_{n+2}=U_{n+1}-U_{n}$. Then the terms of the sequence $(U_n)$ are given by $0, 3, 3, 0, -3, -3, 0, 3, \ldots$. This shows that $U_{n_1}+U_{n_2}=3\in X_1$ for infinitely many $(0, n_2)$ or  $(n_1, 0)$. In this case, the roots of characteristic polynomial of $(U_n)$ are $\frac{1+\sqrt{3}i}{2}  and  \frac{1-\sqrt{3}i}{2}$ whose ratio is a cube root of unity. So non-degeneracy is a necessary condition for the theorem.
\end{remark}

\begin{remark}
Let $K=\mathbb{Q}(\sqrt{2})$ and consider  
	\begin{equation}\label{eqrem3}
	N_{K/\Q}(x_1+x_2\sqrt{2})=-1.
\end{equation}
Then $1, \sqrt{2}$ is an integral basis of $K$. Further, $3+\sqrt{2}$ is the fundamental unit of $K$. So we can write 
	\begin{equation}\label{eq2.9}
		x_1+x_2\sqrt{2}=(1+\sqrt{2})(3+2\sqrt{2})^k,\qquad \text{for some}\,  k\in \mathbb{Z}
	\end{equation}
	Let $\tau_1$ and $\tau_2$ be the isomorphisms on $K$. Then taking conjugates in (\ref{eq2.9}), we get \\
$$	\begin{pmatrix}
		1&\sqrt{2}\\
		1& -\sqrt{2}
	\end{pmatrix}\begin{pmatrix}
	x_1\\
	x_2
	\end{pmatrix}=\begin{pmatrix}
	(1+\sqrt{2})(3+2\sqrt{2})^k\\
		(1-\sqrt{2})(3-2\sqrt{2})^k
	\end{pmatrix}.$$\\[.3cm]
From this we infer \[X_2=\frac{(1+\sqrt{2})^{2k+1}-(1-\sqrt{2})^{2k+1}}{2\sqrt{2}}.\] Now define a sequence $(U_n)$ by $U_0=0, U_1=1$ and $U_{n+1}=6U_{n}-U_{n-1}(n\geq1)$ and the Binet form is \[U_n=\frac{(3+2\sqrt{2})^n-(3-2\sqrt{2})^n}{4\sqrt{2}}.\] Also we see that $U_n+U_{n+1}\in X_2$ for every $n\geq 0$. Note that the roots $3+2\sqrt{2}$ and $3-2\sqrt{2}$ are multiplicatively dependent.
\end{remark}

\begin{remark}
Suppose 
$U_0=0,U_1=1$ and $U_{n+2}=10U_{n+1}-U_n$. The characteristic polynomial of $(U_n)$ is $x^2-10x+1$ and observe that the constant term $a_2=1$. 

Let $K=\mathbb{Q}(\sqrt{2}+\sqrt{3})$ and $\beta=\sqrt{2}+\sqrt{3}$ be a fundamental unit of $K$.  Let $\{v_1,v_2,v_3,v_4\}$ be a basis as constructed in \cite[Theorem 1.3]{elisa} and consider
$$N_{K/\Q}(x_1v_1+\cdots+x_4v_4)=1,$$ that is, 
we can write  $x_1(k)v_1+\cdots+x_4(k)v_4=\eta\beta^k$ where $\eta\in K$.
Then for any integer $k\geq 2$, we have 
$X_1(k)=U_{n+1}+U_n$ if $k=2n+1$ (see \cite[Corollary 4.2]{elisa}).  Hence (\ref{eq6a}) has infinitely many solutions. 
\end{remark}

\section{Auxiliary results}
In this section, we give necessary background to prove our results. First consider the equation
\begin{equation}\label{smith}
	\sum_{l=1}^{r}P_l(\bf{x})\mathfrak{g}_{\it{l}}^{\bf{x}} = 0
\end{equation}
in variables $\bf{x} = (x_1, . . . , x_s) \in \Z^s,$ where the $P_l$ are not identically zero polynomials in $s$ variables with coefficients in an algebraic number field $K$, and\\
$$\mathfrak{g}_l^{\bf{x}} =g^{x_1}_{l_1} \cdots g^{x_s}_{l_s}$$
with given nonzero $g_{l_1}, \ldots , g_{l_s} \in K
\quad (l = 1, \ldots , r).$
Let $\pi$ be a partition of the set $\{1, \ldots , r\}$. The sets $\lambda\subset \{1, \ldots, r\}$ belonging to $\pi$ will be considered to be elements of $\pi$: we write $\lambda \in \pi$. Given a partition $\pi$, the set of equations
\begin{equation}\label{ref}
	\sum_{l\in \la}P_l(\bf{x})\mathfrak{g}_{\it{l}}^{\bf{x}} = 0, \quad \la \in \pi
\end{equation}
yields a refinement of \eqref{smith}. When $\pi'$ is a refinement of $\pi$, then system of equations corresponding to the partition $\pi'$ implies system of equations corresponding to the partition $\pi$. Let $S(\pi)$ be the set of solutions of \eqref{ref} which are not solutions of \eqref{ref} with any proper refinement $\pi'$ of $\pi$. 

Set $i \overset{\pi}{\sim} j$ if $i$ and $j$ lie in the same subset $\la$ belonging to $\pi$. Let $G(\pi)$ be the
subgroup of $\Z^s$ consisting of $\bf{z}$ with
\[\mathfrak{g}_i^{\bf{z}} = \mathfrak{g}_j^{\bf{z}}\] 
for any $i, j$ with $i \overset{\pi}{\sim} j$.

\begin{lemma}\label{gp}
	Using the above notation, if $G(\pi) = \{0\}$ then we have
	\[S(\pi)<2^{35A^3}D^{6A^2}\]
	with $D = \deg(K)$ and
	\[ A = \max \left(s, \sum_{l\in \Lambda}\binom{s+\delta_l}{s}\right),\]
	where $\delta_l$ is the total degree of the polynomial $P_l.$
\end{lemma}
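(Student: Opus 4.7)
The statement is a bound of Schlickewei--Schmidt / Evertse type on polynomial-exponential equations, so the plan is to reduce \eqref{smith} to an $S$-unit equation of length $A$ and then invoke the quantitative ($p$-adic) subspace theorem in the Evertse--Schlickewei--Schmidt form. First I would expand each polynomial $P_l({\bf x})$ as a sum of monomials $c_{l,{\bf i}} x_1^{i_1}\cdots x_s^{i_s}$ of total degree at most $\delta_l$, so that the left-hand side of \eqref{smith} becomes a linear combination of at most $A$ terms of shape $c_{l,{\bf i}} {\bf x}^{\bf i} \mathfrak{g}_l^{\bf x}$. The quantity $A$ appearing in the hypothesis is precisely this total count of monomial-times-exponential summands, so after absorbing the monomials into formal multiplicative symbols the relation reads as an $S$-unit equation of length $A$.

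Next, for a solution ${\bf x} \in S(\pi)$ the partition structure says that for every block $\lambda \in \pi$ the subsum $\sum_{l \in \lambda} P_l({\bf x}) \mathfrak{g}_l^{\bf x} = 0$ holds and no proper sub-block sum vanishes. Dividing a fixed block equation by one nonzero summand one obtains a projective relation among the ratios $\mathfrak{g}_l^{\bf x} / \mathfrak{g}_{l_0}^{\bf x}$, which lie in a fixed finitely generated multiplicative group attached to the $g_{ij}$. The hypothesis $G(\pi) = \{0\}$ is exactly what is needed to guarantee that two solutions ${\bf x} \neq {\bf x}'$ yield distinct projective data for at least one block, so that counting projective solutions counts the ${\bf x}$-solutions themselves without overcounting.

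With this $S$-unit formulation in place I would apply the quantitative subspace theorem to each block separately: for a non-degenerate $S$-unit relation of length $A$ over a number field of degree $D$, the Evertse--Schlickewei--Schmidt bound furnishes an upper estimate of the form $c_1(A)^{A^2} D^{c_2 A^2}$ on the number of projective solutions; tracking the constants through the recursive peeling of subsums and summing over blocks $\lambda \in \pi$ produces exactly the quoted bound $2^{35 A^3} D^{6 A^2}$. The definition of $S(\pi)$ (minimality of the vanishing subsum) is what supplies the non-degeneracy hypothesis required by the subspace theorem on each block, and $G(\pi) = \{0\}$ rules out the only remaining source of infinite families.

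The main obstacle is the presence of the polynomial prefactors ${\bf x}^{\bf i}$, since the subspace theorem is formulated for pure multiplicative relations among $S$-units. The cleanest route around this is to enlarge the ambient multiplicative setup by formally adjoining the coordinates $x_1, \ldots, x_s$ as additional group-like variables, thereby turning each ${\bf x}^{\bf i}$ into a monomial symbol on the same footing as $\mathfrak{g}_l^{\bf x}$; the price is that the effective length inflates from $r$ to $A = \max(s, \sum \binom{s + \delta_l}{s})$, which is exactly why $A$ and not $r$ controls the final exponent. Verifying that minimality of $\pi$ together with $G(\pi) = \{0\}$ persists under this enlargement, and keeping the combinatorial blow-up within the constants $2^{35 A^3}$ and $D^{6 A^2}$, is the technical heart of the argument.
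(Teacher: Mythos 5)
The paper does not actually prove this lemma: it is quoted verbatim as Theorem~1 of Schlickewei and Schmidt \cite{ss2000}, and the ``proof'' in the paper is a one-line citation. You are therefore attempting to reconstruct from scratch a result whose published proof occupies a substantial part of a thirty-page paper, and your sketch has a genuine gap at exactly the point you identify as the technical heart. The step where you ``formally adjoin the coordinates $x_1,\ldots,x_s$ as additional group-like variables'' so that each monomial $\mathbf{x}^{\mathbf{i}}$ sits ``on the same footing as $\mathfrak{g}_l^{\mathbf{x}}$'' is not a valid reduction: as $\mathbf{x}$ ranges over $\mathbb{Z}^s$, the values $x_1^{i_1}\cdots x_s^{i_s}$ do not lie in any multiplicative group of finite rank, so the quantitative unit-equation bounds you want to invoke (whose constants depend on the rank of the group containing the unknowns) simply do not apply to the enlarged relation. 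The ratios $\mathfrak{g}_i^{\mathbf{x}}/\mathfrak{g}_j^{\mathbf{x}}$ do lie in the finitely generated group spanned by the $g_{lj}$, but the polynomial prefactors cannot be disposed of this way, and no amount of bookkeeping with $A$ repairs that.

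What Schlickewei and Schmidt actually do is different in kind: they regard the $A$ quantities $\mathbf{x}^{\mathbf{i}}\mathfrak{g}_l^{\mathbf{x}}$ as the coordinates of a point of $\mathbb{P}^{A-1}$ lying on the hyperplane given by \eqref{smith}, and apply their quantitative ($p$-adic, absolute) subspace theorem to the resulting system of linear forms, using the fact that the polynomial factors contribute only $O(\log\|\mathbf{x}\|)$ to the height and are therefore dominated by the exponential parts; the hypothesis $G(\pi)=\{0\}$ and the minimality built into $S(\pi)$ are then used in an induction over refinements of partitions to exclude infinite families and to keep the count within $2^{35A^3}D^{6A^2}$. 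Your second paragraph also overstates the role of $G(\pi)=\{0\}$: its function is to rule out whole cosets of solutions (when $G(\pi)\neq\{0\}$ the set $S(\pi)$ can genuinely be infinite), not merely to prevent overcounting of projective data. For the purposes of this paper the correct move is to cite \cite{ss2000} rather than to reprove the bound.
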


\begin{proof}
	The statement is Theorem 1 in \cite{ss2000}.
\end{proof}

Following the arguments from \cite{arith} and \cite{fh2021}, we prove the following.
\begin{proposition}\label{lem normeqsol}
If \eqref{normeq} has a  solution, then all solutions are given by
	\begin{equation*}
		x_{i}^{(j)}=F_{i}^{(j)}(\textbf{n}) 
	\end{equation*}
	for $j=1,\ldots, J$ and $i=1,\ldots, k$ with some multi-recurrence
	$$F_{i}^{j}(\textbf{n})=\sum_{h=1}^{r} P_{i_h}^{(j)}(\textbf{n})\boldsymbol{\alpha}_{i_h}^{{\textbf{n}}},$$
	where $\boldsymbol{\alpha}_{i_h}$ are irrational as well as all co-ordinates are algebraic over $\mathbb{Q}$
and $J$ is some positive integer such that $J<C_3$, where $C_3$ is an effectively computable constant depending on $k$ and $m$.
\end{proposition}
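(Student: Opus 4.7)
The plan is to reduce the norm form equation to a union of unit equations by classical algebraic number theory, and then to extract the coordinates via the Galois embeddings of $K$. This is the strategy underlying \cite{arith} and \cite{fh2021}, and the statement essentially packages their reduction.

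Write $\mu = x_1\beta_1 + \cdots + x_k\beta_k$; then $D\mu = \sum_{i=1}^{k} x_i\alpha_i \in \mathcal{O}_K$ has norm $D^k m$, a fixed nonzero integer. Hence the principal ideal $(D\mu)$ coincides with one of the (effectively boundedly many) integral ideals of $\mathcal{O}_K$ of norm $|D^k m|$. Choosing a generator of each such ideal when it is principal, I obtain a finite set $\gamma_1,\ldots,\gamma_T \in \mathcal{O}_K$ such that every solution satisfies $D\mu = \gamma_t \cdot u$ for some $t$ and some unit $u \in \mathcal{O}_K^{*}$. By Dirichlet's unit theorem, $u = \zeta^{c}\epsilon_1^{m_1}\cdots\epsilon_s^{m_s}$ for fundamental units $\epsilon_1,\ldots,\epsilon_s$ (with $s \le k-1$) and a root of unity $\zeta$ whose order is bounded in terms of $k$. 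Splitting the $2^s$ possible sign patterns of $(m_1,\ldots,m_s)$ (each pattern absorbing the relevant $\epsilon_\nu^{-1}$ into a new base), together with the finitely many choices of $(t,c)$, yields $J$ cases, with $J < C_3(k,m)$ effectively computable, in each of which
\[
D\mu = \eta\,\tilde{\epsilon}_1^{n_1}\cdots\tilde{\epsilon}_s^{n_s}
\]
for some $\eta \in K^{\times}$, fixed bases $\tilde{\epsilon}_\nu \in \{\epsilon_\nu,\epsilon_\nu^{-1}\}$, and $\mathbf{n} \in \N_0^s$.

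To read off the coordinates, I apply the $k$ embeddings $\sigma_1,\ldots,\sigma_k : K \hookrightarrow \overline{\Q}$ to the identity $D\mu = \sum_i x_i \alpha_i$. The matrix $M = (\sigma_a(\alpha_i))_{a,i}$ is invertible because $\beta_1,\ldots,\beta_k$ are $\Q$-linearly independent in a degree-$k$ field, so inversion yields
\[
x_i = \sum_{a=1}^{k} (M^{-1})_{ia}\,\sigma_a(\eta)\,\sigma_a(\tilde{\epsilon}_1)^{n_1}\cdots\sigma_a(\tilde{\epsilon}_s)^{n_s},
\]
which is a multi-recurrence in $\mathbf{n}$ with bases $\boldsymbol{\alpha}_a = (\sigma_a(\tilde{\epsilon}_1),\ldots,\sigma_a(\tilde{\epsilon}_s))$; the coefficients here are constants, i.e.\ polynomials of degree zero, which is permitted by the definition. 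Each $\sigma_a(\tilde{\epsilon}_\nu)$ is irrational, since the only rational algebraic units are $\pm 1$, while $\tilde{\epsilon}_\nu$ has infinite order and so no conjugate of it can equal $\pm 1$.

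The main obstacle is the effective bookkeeping of the bound $J < C_3(k,m)$: one has to consolidate the classical bound on the number of integral ideals of norm $|D^k m|$ (in terms of $k$ and $m$), the bounded order of the torsion subgroup of $\mathcal{O}_K^{*}$ (whose elements have degree $\le k$ over $\Q$), and the $2^s$-fold sign split with $s \le k-1$. A secondary subtlety is certifying that after this reduction no base in the resulting multi-recurrence collapses to a rational value; this is carried out rigorously in \cite{arith, fh2021}, and the argument there transfers to the present setting essentially verbatim.
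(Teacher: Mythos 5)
Your proposal is correct and follows essentially the same route as the paper: reduce to finitely many associate classes of elements of the given norm, write the unit via a system of fundamental units, take the $k$ conjugates and invert the (nonsingular) matrix of conjugates of the $\beta_i$, then argue that the resulting bases are irrational because a rational conjugate of a fundamental unit would force the unit to be $\pm 1$. Your extra bookkeeping (clearing the denominator $D$ before invoking the ideal-counting bound, and the $2^s$ sign split to keep the exponents in $\N_0^s$) only tidies details the paper leaves implicit.
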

\begin{proof}
	We know that there are only finitely many pairwise non-associate algebraic integers $\mu$ in $K$ of norm $m$, and let the number of such elements be $J$. So $J$ can be bounded in terms of $k$ and $m$ (see \cite[Lemma 5]{gyory}). Let $\mu_1,\ldots,\mu_J$ be the pairwise non-associate algebraic integers  in $K$ of norm $m$.  If $(x_1^{(j)},\ldots,x_{k}^{(j)})\in \mathbb{Z}^{k}$ is a solution of \eqref{normeq}, then we have  
	\begin{equation}\label{lem eq3.4}
	x_{1}^{(j)}\beta_1+\cdots x_{k}^{(j)}\beta_k= \epsilon \mu_j	
	\end{equation} 
	for some $j\in J$ and $\epsilon$ is a unit in $K$.  Let $\epsilon_1,\ldots,\epsilon_s$ be a system of fundamental units in $K$. Then \eqref{lem eq3.4} yields 
	\begin{equation}\label{eq x}
			x_{1}^{(j)}\beta_1+\cdots +x_{k}^{(j)}\beta_k= \gamma\epsilon_{1}^{a_1}\cdots \epsilon_{s}^{a_s}\mu_j
	\end{equation}
	where $\gamma$ is a root of unity in $K$. Since the number of roots of unity in $K$ is bounded in terms of $k$, we may assume that this $\gamma$ is fixed. Now taking the conjugates of (\ref{eq x}), we get
	\begin{equation}\label{lem matrix}
		\begin{pmatrix}
			\tau_1(\beta_1)&\cdots&\tau_1(\beta_k)\\
			\vdots&\ddots&\vdots\\
			\tau_k(\beta_1)&\cdots&\tau_k(\beta_k)
		\end{pmatrix} 
		\begin{pmatrix}
			x_1^{(j)}\\
			\vdots\\
			x_k^{(j)}
		\end{pmatrix}= \begin{pmatrix}
			\tau_1(\gamma\mu_j)u_{11}^{a_1}\cdots u_{1s}^{a_s}\\
			\vdots\\
			\tau_k(\gamma\mu_j)u_{k1}^{a_1}\cdots u_{ks}^{a_s}
		\end{pmatrix}
	\end{equation}
	where $\tau_1,\ldots,\tau_k$ are the isomorphism of $K$ into $\mathbb{C}$ (in any order), and $u_{ih}=\tau_i(\epsilon_h)$ for $i=1,\ldots, k$ and $h=1,\ldots, s$. The determinant of the matrix on the left hand side of (\ref{lem matrix}) is non-zero, so we write 
		\begin{equation}\label{lem eq3.8}
			x_i^{(j)}=c_{1i}^{(j)}u_{11}^{a_1}\cdots u_{1s}^{a_s}+\cdots+c_{ki}^{(j)}u_{k1}^{a_1}\cdots u_{ks}^{a_s}
		\end{equation}
		where $c_{ih}^{(j)}$ belong to the normal closure of $K$ for $i=1,\ldots,k$ and $u_{ih}(1\leq i\leq k, 1\leq h\leq s )$ are irrational. For if, any of the $u_{ij}$ is rational, then $\epsilon_j$ must be rational. Since $\epsilon_j$ is a fundamental unit, $N_{K/\Q}(\epsilon_j)$ should be $\pm 1$ which is not possible as we obtained that $\epsilon_j$ is rational and  considered $K$ is different from imaginary quadratic field and $\mathbb{Q}$. Thus, (\ref{lem eq3.8}) is the required multi-recurrence. 
\end{proof}

\begin{remark}
Suppose for some distinct indices $i, j$ such that  $\boldsymbol{\alpha}_{i}^{\bf{n}} = \boldsymbol{\alpha}_{j}^{\bf{n}}$, that is, 
\begin{equation}\label{eqnondeg}
	\alpha_{i1}^{n_1}\cdots\alpha_{is}^{n_s}=\alpha_{j1}^{n_1}\cdots\alpha_{js}^{n_s}
\end{equation}
holds for infinitely many vectors  $(n_1, \ldots, n_s)$. Then we can construct a multi-recurrence 
\begin{equation*}
	F'(n_1,\ldots,n_s)=\sum_{i=1}^r(P_i\alpha_{i1}^{n_1}\cdots\alpha_{is}^{n_s}-P_i\alpha_{j1}^{n_1}\cdots\alpha_{js}^{n_s})
\end{equation*}
which is zero for the vectors in our sequence such that 
$$\begin{array}{cl}
	F_0(n_1,\ldots,n_s)&:=F(n_1,\ldots,n_s)-	F'(n_1,\ldots,n_s)\\
	\\
	&= \sum_{j=1}^{\tilde{r}}\tilde{P_{j}}\alpha_{j1}^{n_1}\cdots\alpha_{js}^{n_s}
\end{array}$$
does not contain two summands which satisfy a relation of the shape \eqref{eqnondeg}.
\end{remark}

Note that both effective and ineffective results related to sums of $S$-units in linear recurrence sequence has been studied in \cite{behaj}.  We need a similar type of ineffective result for multi-recurrences.
\begin{proposition}\label{prop1}
	Let $F(\textbf{n})$ be a non-degenerate multi-recurrence defined as in (\ref{mrec}) and $w_1,\ldots, w_t \in U_S$. Further, assume that $\alpha_{ij}$ are algebraic and  $\boldsymbol{\alpha}_{i}$ is irrational for some $i=1,\ldots, r$. Then for any fixed $t\geq 1$,
	\begin{equation}\label{prop eq}
		F(\textbf{n})=w_1+\cdots+w_t	
	\end{equation} 
is solvable at most for finitely many $\textbf{n}$, which can be bounded by an effectively computable constant $C_4$ depending on $r$ and $t$.
\end{proposition}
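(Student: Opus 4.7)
The strategy is to rewrite \eqref{prop eq} as a polynomial-exponential equation of the form covered by Lemma~\ref{gp} and then use the non-degeneracy of $F$ to exclude infinite families coming from the ``degenerate'' partitions.

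First I fix a system $g_1,\ldots,g_l$ of fundamental $S$-units of $K$; every $w\in U_S$ then has a representation $w=\zeta g_1^{c_1}\cdots g_l^{c_l}$ with $\zeta$ a root of unity in $K$ and $c_k\in\Z$. Since $K$ contains only finitely many roots of unity, it suffices to fix the tuple $(\zeta_1,\ldots,\zeta_t)$ of roots of unity attached to $(w_1,\ldots,w_t)$ and handle each case separately. For each such choice, \eqref{prop eq} becomes
\begin{equation*}
\sum_{i=1}^{r}P_i(\textbf{n})\,\alpha_{i1}^{n_1}\cdots\alpha_{is}^{n_s}\;-\;\sum_{j=1}^{t}\zeta_j g_1^{c_{j,1}}\cdots g_l^{c_{j,l}}=0,
\end{equation*}
a polynomial-exponential equation in the unknowns $\textbf{x}=(n_1,\ldots,n_s,c_{1,1},\ldots,c_{t,l})\in\Z^{s+tl}$ of exactly the shape \eqref{smith}.

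Next, applying Lemma~\ref{gp}, the solution set decomposes as a finite union $\bigcup_\pi S(\pi)$ indexed by partitions $\pi$ of the $r+t$ term indices. For every $\pi$ with $G(\pi)=\{0\}$ the lemma directly gives an effective bound on $|S(\pi)|$ in terms of $s+tl$, $[K:\Q]$, and the total degrees of the $P_i$; since these are all fixed by the data, the resulting bound depends only on $r$ and $t$.

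The main obstacle is ruling out infinite families of $\textbf{n}$-values coming from partitions $\pi$ with $G(\pi)\neq\{0\}$. Such a family lies in a coset $\textbf{n}_0+H$ of a subgroup $H\subset\Z^s$ (the $\textbf{n}$-projection of $G(\pi)$) with $(\textbf{n}_0+H)\cap\mathbb{N}_0^s$ infinite, and along this coset the ratio $\boldsymbol{\alpha}_i^{\textbf{n}}/\boldsymbol{\alpha}_j^{\textbf{n}}$ is constant for any two indices $i,j$ lying in the same block of $\pi$. A block containing two first-sum indices $i,j\in\{1,\ldots,r\}$ yields the identity $\boldsymbol{\alpha}_i^{\textbf{n}}=\boldsymbol{\alpha}_j^{\textbf{n}}$ on infinitely many $\textbf{n}\in\mathbb{N}_0^s$, contradicting the non-degeneracy of $F$ (cf.\ the Remark following Proposition~\ref{lem normeqsol}). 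A block mixing a first-sum index with $w_j$-indices gives an identity $P_i(\textbf{n})\boldsymbol{\alpha}_i^{\textbf{n}}=\sum_{j\in\lambda}\zeta_j g_1^{c_{j,1}}\cdots g_l^{c_{j,l}}$ along the coset; enlarging $S$ so that every $\alpha_{ij}$ is itself an $S$-unit and invoking the irrationality of $\boldsymbol{\alpha}_i$ together with an archimedean growth comparison between the polynomial $P_i$ and the $S$-unit sum forces $H=\{0\}$. Summing the resulting bounds over the finitely many partitions yields the effective constant $C_4=C_4(r,t)$.
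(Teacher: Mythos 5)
Your overall framework matches the paper's: rewrite \eqref{prop eq} as a polynomial--exponential equation of the shape \eqref{smith}, run the Schlickewei--Schmidt partition analysis, and use non-degeneracy of $F$ to kill partitions that put two recurrence indices $i,j\in\{1,\ldots,r\}$ in the same block. Those parts are fine. The genuine gap is in the case you defer to the end: a block $\lambda$ mixing a recurrence index $i$ with $w$-indices, giving the subequation $P_i(\textbf{n})\boldsymbol{\alpha}_i^{\textbf{n}}=\sum_{j\in\lambda}w_j$. Your proposed resolution --- ``enlarging $S$ so that every $\alpha_{ij}$ is itself an $S$-unit and invoking \ldots an archimedean growth comparison \ldots forces $H=\{0\}$'' --- does not work, and in fact points in the wrong direction. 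Once the $\alpha_{ij}$ are $S$-units, the left-hand side $P_i(\textbf{n})\boldsymbol{\alpha}_i^{\textbf{n}}$ (for constant $P_i$) \emph{is} essentially an $S$-unit for every $\textbf{n}$, so the subequation can hold identically along a whole coset; no growth comparison can rule this out, because a sum of $S$-units can realize any growth rate. The paper's own example with $\boldsymbol{\alpha}_1=(5,6)$, $\boldsymbol{\alpha}_2=(10,3)$, $S=\{2,3,5\}$ (equation \eqref{remeq1}) is exactly such an infinite family; your argument as written would not distinguish it from the irrational case, yet this is precisely where the hypothesis that some $\boldsymbol{\alpha}_i$ is irrational must enter.

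What the paper actually does at this point is a Galois conjugation argument, and this is the idea missing from your proposal. Irrationality of $\boldsymbol{\alpha}_i$ supplies an embedding $\sigma$ of $\Q(\alpha_{11},\ldots,\alpha_{rs})$ into $\C$ with $\sigma(\boldsymbol{\alpha}_i)=\boldsymbol{\alpha}_j\neq\boldsymbol{\alpha}_i$. Applying $\sigma$ to the subequation $P_i(\textbf{n})\boldsymbol{\alpha}_i^{\textbf{n}}=w_{r_1}+\cdots+w_{r_h}$ and using that the $S$-unit sum on the right is unchanged, one obtains
\begin{equation*}
P_i(\textbf{n})\boldsymbol{\alpha}_i^{\textbf{n}}=\sigma(P_i(\textbf{n}))\,\boldsymbol{\alpha}_j^{\textbf{n}},
\end{equation*}
a two-term polynomial--exponential equation whose group $G(\pi)$ is trivial by non-degeneracy, so Lemma \ref{gp} bounds its solutions effectively. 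If you want to salvage your write-up, replace the growth-comparison sentence by this conjugation step (and note that it silently requires the $w$'s, or at least their sum, to be fixed by $\sigma$ --- a point worth making explicit, since it is what separates the rational-$\boldsymbol{\alpha}$ counterexample from the case covered by the proposition). A smaller omission: you do not address blocks consisting only of $w$-indices or a singleton recurrence block forcing $P_i(\textbf{n})\boldsymbol{\alpha}_i^{\textbf{n}}=0$; these are easy but should be mentioned for completeness.
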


\begin{proof}
We rewrite \eqref{prop eq} using \eqref{mrec}, as  
	\begin{equation}\label{prop 3.4}
		\sum_{i=1}^{r}P_{i}(n_1,\ldots, n_s)\alpha_{i1}^{n_1}\ldots\alpha_{is}^{n_s}= \mathcal{P}_1^{b_{11}}\cdots \mathcal{P}_l^{b_{1l}}+\cdots+ \mathcal{P}_1^{b_{t1}}\cdots \mathcal{P}_l^{b_{tl}}
	\end{equation}
	where $b_{uv}(1\leq u\leq t, 1 \leq v \leq l)$ are non-negative integers. Let $\pi$ be any partition of the set $\{1,\ldots, r, r+1, \ldots, r+t\}$. Consider first those solutions of (\ref{prop 3.4})  which do not satisfy any refinement of $\pi$, and for which there exists $i, j$ with $1\leq i<j\leq r$ belong to the same $\lambda \in \pi$. Since $F$ is non-degenerate, we conclude that $G(\pi)=\{\textbf{0}\}$. The boundedness of the number of such solutions is followed by Lemma \ref{gp}. 
	
Now consider those solutions of (\ref{prop 3.4}) which still do not satisfy any refinement of $\pi$, and for which the indices $i$ with $1\leq i\leq r$ belong to different classes of $\pi$. Consider an $i\;(1\leq i\leq r)$ such that $\boldsymbol{\alpha}_i$ is irrational. Take a class $\lambda\in \pi$ which contains $i$. There exists an isomorphism $\sigma$ of $\mathbb{Q}(\alpha_{11},\ldots,\alpha_{1s},\ldots,\alpha_{r1},\ldots\alpha_{rs})$ into $\mathbb{C}$ such that $\sigma(\boldsymbol{\alpha}_{i})=\boldsymbol{\alpha}_{j}$ with some $1\leq i\neq j\leq r$. So we have 
\[P_i(\textbf{n})\boldsymbol{\alpha}_{i}^{\textbf{n}}=w_{r_1}+\cdots+w_{r_h}\] for some $\{r_1,\ldots, r_h\}\subset\{1,\ldots,t\}$, this implies 
\[P_i(\textbf{n})\boldsymbol{\alpha}_{i}^{\textbf{n}}=\sigma(P_i(\textbf{n}))\boldsymbol{\alpha}_{j}^{\textbf{n}}.\] By Lemma \ref{gp} this equation has only finitely many solutions, and their number can be effectively bounded. Since the number of partitions $\pi$ of $\{1,\ldots, r+t\}$ is bounded in terms of $r$ and $t$, this completes the proof. 
\end{proof}

\begin{remark}
	In (\ref{mrec}), let us take $s=2,r=2$, $P_1=P_2=1$, and $\boldsymbol{\alpha_1}=(5,6),\ \boldsymbol{\alpha_2}=(10,3)$ and $S=\{2,3,5\}$. Then \eqref{snormeq} becomes
	\begin{equation}\label{remeq1}
 2^{b_{11}}3^{b_{12}}5^{b_{13}}+ 2^{b_{21}}3^{b_{22}}5^{b_{23}}=	5^{n_1}\cdot 6^{n_2}+ 10^{n_1}\cdot 3^{n_2}.
	\end{equation}
So that \eqref{remeq1} has infinitely many solutions in $(n_1, n_2)$ with $n_1=b_{13}=b_{21}=b_{23}$ and $n_2=b_{11}=b_{12}=b_{22}$. Observe that $\boldsymbol{\alpha_1}$ and $\boldsymbol{\alpha_2}$ are not irrational. 
\end{remark}

\begin{proposition}\label{lem uniteq}
Let $b_1,\ldots,b_t$ be non-zero elements of the number field $L$. Then the equation
\begin{equation}\label{lem suniteq}
	b_1y_1+\cdots+b_ty_t=1 
\end{equation}
has atmost $C_5$ solutions $(y_1,\ldots,y_t)\in U_{S}^{t}$ for which the left hand side of (\ref{lem suniteq}) has no vanishing subsums. Here $C_5$ is an effectively computable constant depending only on $t, l$ and $\deg L$.	
\end{proposition}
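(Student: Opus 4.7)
The plan is to cast the $S$-unit equation \eqref{lem suniteq} as a polynomial-exponential equation of the form \eqref{smith} and deduce the bound from Lemma \ref{gp}.

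By Dirichlet's $S$-unit theorem, $U_S = \mu(L)\times\langle v_1,\dots,v_r\rangle$, where $\mu(L)$ is the group of roots of unity of $L$ and the free rank satisfies $r\le l+\deg L-1$; moreover $|\mu(L)|$ is bounded in terms of $\deg L$. Each solution admits a unique representation $y_i=\zeta_i v_1^{a_{i1}}\cdots v_r^{a_{ir}}$ with $\zeta_i\in\mu(L)$, and since the tuple $(\zeta_1,\dots,\zeta_t)$ ranges over a set whose cardinality is bounded in terms of $t$ and $\deg L$, I would fix it and count only the exponent vectors $\mathbf{x}=(a_{11},\dots,a_{tr})\in\mathbb{Z}^{tr}$. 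For each such choice, \eqref{lem suniteq} takes the shape
\[
\sum_{i=1}^{t} c_i\, \mathfrak{g}_i^{\mathbf{x}} + c_{t+1}\, \mathfrak{g}_{t+1}^{\mathbf{x}} = 0,
\]
with $c_i=b_i\zeta_i\in L^*$, $c_{t+1}=-1$, $\mathfrak{g}_i$ the tuple placing $(v_1,\dots,v_r)$ in the block of positions $(i-1)r+1,\dots,ir$ and $1$'s elsewhere, and $\mathfrak{g}_{t+1}=(1,\dots,1)$. This matches \eqref{smith} in $s=tr$ variables with $t+1$ terms and constant polynomials $P_\ell=c_\ell$ (so $\delta_\ell=0$).

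Next I would invoke the non-vanishing hypothesis \eqref{nonzerow} to pin down the relevant partition. Suppose an actual solution lies in $S(\pi)$ for some proper refinement $\pi$ of the trivial partition $\pi_0=\{\{1,\dots,t+1\}\}$. Then either $\pi$ contains a class $C\subseteq\{1,\dots,t\}$, forcing $\sum_{i\in C}b_i y_i=0$ and contradicting \eqref{nonzerow}, or the singleton $\{t+1\}$ is a class, which forces $-1=0$. Hence every solution lies in $S(\pi_0)$. For $\pi_0$, the multiplicative independence of $v_1,\dots,v_r$ together with the disjoint block structure of the $\mathfrak{g}_i$ immediately yields $G(\pi_0)=\{0\}$, so Lemma \ref{gp} applies with $A=tr\le t(l+\deg L-1)$ and $D=\deg L$, giving an effective bound of the form $2^{35A^3}(\deg L)^{6A^2}$ on the number of admissible $\mathbf{x}$.

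Multiplying by the finitely many choices of $(\zeta_1,\dots,\zeta_t)$ yields the required constant $C_5=C_5(t,l,\deg L)$. The main delicate point is the matching of \eqref{nonzerow} with the partition framework underlying Lemma \ref{gp}; once the trivial partition has been isolated as the only possibility, verification of $G(\pi_0)=\{0\}$ and the computation of $A$ are routine.
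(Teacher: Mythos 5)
Your argument is correct, but it takes a genuinely different route from the paper: the paper's entire proof of this proposition is a citation to the quantitative theory of $S$-unit equations, namely Theorem 6.1.3 and Corollary 4.1.5 of \cite{eversgyo}, whereas you re-derive the statement from Lemma \ref{gp} (Schlickewei--Schmidt). Your reduction is sound: writing each $y_i$ via a fixed system of generators of $U_S$, fixing the root-of-unity parts (boundedly many choices in terms of $t$ and $\deg L$), and recasting \eqref{lem suniteq} as a purely exponential instance of \eqref{smith} in $s=tr$ disjoint exponent blocks; the no-vanishing-subsum hypothesis forces every solution into $S(\pi_0)$ for the trivial partition (a proper refinement has at least two classes, at most one of which contains the index $t+1$, so some class lies in $\{1,\dots,t\}$ and produces a vanishing subsum), and the block structure together with the multiplicative independence of the generators gives $G(\pi_0)=\{0\}$, so Lemma \ref{gp} applies with $A=tr\le t(l+\deg L-1)$ and $D=\deg L$. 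One cosmetic slip: the hypothesis you should invoke is the ``no vanishing subsums'' condition in the proposition itself, not \eqref{nonzerow}, which is the corresponding condition on the $w_i$ in Theorem \ref{thm1}. As for what each approach buys: the paper's citation is shorter and yields the sharper, purpose-built bounds for $S$-unit equations, while your derivation makes the proposition self-contained, obtaining it as a corollary of the same polynomial-exponential machinery already used for Theorem \ref{thm2} and Proposition \ref{prop1}, at the cost of a somewhat worse (but still effective) constant.
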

\begin{proof}
	See \cite[Theorem 6.1.3]{eversgyo} and \cite[Corollary 4.1.5]{eversgyo}.
\end{proof}

We also need the following result.

\begin{lemma}\label{lem3.4}
Let $(U_n)_{n\geq 0}$ be a linear recurrence sequence of integers of order at least two. Further, assume that the roots of the characteristic polynomial $\alpha_i, (1\leq i \leq q)$ are pairwise multiplicatively independent. Let $T$ be the set of places of $\Q(\alpha_1, \ldots, \alpha_q)$ such that $||\alpha_i||_v=1$ for all $i=1, \ldots, q$ and $v\not \in T$.  Suppose $\Delta \subseteq \{1, \ldots, q, 1, \ldots, q\}$. Then, there are finitely many tuples $(n_1, n_2)\in \Z^2_{\geq0}$ satisfying
  \begin{equation}\label{eq1lem5}
  \sum_{i\in \Delta}f_i(n_1)\alpha_i^{n_1}+f_i(n_2)\alpha_i^{n_2} =0
  \end{equation}
and \begin{equation}\label{neqeq155}
f_i(n_1)\alpha_{i}^{n_1}+f_i(n_2) \alpha_i^{n_{2}}\neq 0, \quad (1\leq i \leq q).
\end{equation}
\end{lemma}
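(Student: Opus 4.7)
The plan is to read \eqref{eq1lem5} as a polynomial-exponential equation of Schmidt--Schlickewei shape \eqref{smith} in the two unknowns $(n_1,n_2)$ and then to harvest finiteness from Lemma \ref{gp} by a case analysis of the vanishing subsums. Concretely, I would index the summands by pairs $(i,j)\in\{1,\ldots,q\}\times\{1,2\}$, where $(i,j)$ stands for $f_i(n_j)\alpha_i^{n_j}$, and set $P_{(i,j)}(n_1,n_2)=f_i(n_j)$ together with $\mathfrak{g}_{(i,1)}=(\alpha_i,1)$ and $\mathfrak{g}_{(i,2)}=(1,\alpha_i)$. With this dictionary \eqref{eq1lem5} becomes
$$\sum_{(i,j)\in\Delta}P_{(i,j)}(n_1,n_2)\,\mathfrak{g}_{(i,j)}^{(n_1,n_2)}=0,$$
which is of the shape \eqref{smith} with $s=2$ over $K=\Q(\alpha_1,\ldots,\alpha_q)$; the doubled index set $\{1,\ldots,q,1,\ldots,q\}$ appearing in the hypothesis is exactly this pair-labeling.

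Every solution belongs to $S(\pi)$ for some partition $\pi$ of $\Delta$, so it suffices to bound $|S(\pi)|$ for each of the finitely many partitions. Each class $\lambda\in\pi$ forces the subsum $\sum_{(i,j)\in\lambda}f_i(n_j)\alpha_i^{n_j}=0$, and I would sort the classes into three shapes. Type (i): a singleton $\{(i,j)\}$, which forces $f_i(n_j)=0$ and pins $n_j$ in a finite set. Type (ii): a pair $\{(i,1),(i,2)\}$ on its own, whose subsum is $f_i(n_1)\alpha_i^{n_1}+f_i(n_2)\alpha_i^{n_2}=0$ and is therefore ruled out by \eqref{neqeq155}. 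Type (iii): any other non-singleton class, containing two indices $(i,j)\neq(i',j')$ not of shape (ii); the defining relation $\mathfrak{g}_{(i,j)}^{(z_1,z_2)}=\mathfrak{g}_{(i',j')}^{(z_1,z_2)}$ of $G(\pi)$ then becomes a multiplicative identity of the form $\alpha_i^{\pm z_j}\alpha_{i'}^{\mp z_{j'}}=1$, and the pairwise multiplicative independence of the $\alpha_i$ together with the hypothesis that none of them is a root of unity forces a nontrivial linear condition on $(z_1,z_2)$.

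For any partition $\pi$ surviving case (ii), the linear conditions contributed by its type-(iii) classes combine so that either $G(\pi)=\{0\}$ and Lemma \ref{gp} directly bounds $|S(\pi)|$, or exactly one coordinate remains free but is restricted to a finite set by an accompanying type-(i) class or by the one-variable polynomial-exponential equation coming from a type-(iii) class confined to a single slot. Summing these bounds over the finitely many partitions of $\Delta$ yields the claimed finiteness. The delicate point is case (iii): one has to verify uniformly that the multiplicative independence of the $\alpha_i$, together with \eqref{neqeq155} ruling out the diagonal pair $\{(i,1),(i,2)\}$, prevents any configuration of $\pi$ from leaving both $n_1$ and $n_2$ simultaneously free.
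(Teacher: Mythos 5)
Your argument is essentially correct, but it is a genuinely different route from the paper's: the paper does not prove Lemma \ref{lem3.4} at all, it simply cites Theorem 1 of Peth\H{o} and Tichy \cite{pt}, whereas you reprove the statement from scratch by encoding \eqref{eq1lem5} as a two-variable polynomial--exponential equation and running the partition/vanishing-subsum analysis of Schlickewei--Schmidt (Lemma \ref{gp}). Your encoding with indices $(i,j)$, exponent vectors $\mathfrak{g}_{(i,1)}=(\alpha_i,1)$, $\mathfrak{g}_{(i,2)}=(1,\alpha_i)$ and polynomials $f_i(n_j)$ is exactly the device the authors themselves use later in the proof of Theorem \ref{thm2}, so your proof has the virtue of keeping the whole paper inside the single toolkit of \cite{ss2000}; it also yields an explicit bound on the number of tuples $(n_1,n_2)$, which is what the effectivity claim for $C_1$ in Theorem \ref{thm2} really needs, while the citation to \cite{pt} delivers finiteness through the $S$-unit equation machinery. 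Two small points should be made explicit. First, you invoke ``none of the $\alpha_i$ is a root of unity,'' which is not a stated hypothesis of the lemma; it does follow from pairwise multiplicative independence once $q\geq 2$ (take the relation $\alpha_i^{n}\alpha_j^{0}=1$), and it is assumed in Theorem \ref{thm2} where the lemma is applied, but it should be recorded. Second, the ``delicate point'' you flag does check out and deserves the two lines it takes: any class meeting both slots with distinct first indices forces $z_1=z_2=0$ outright; a class containing the diagonal pair $\{(i,1),(i,2)\}$ together with any third element again forces $z=0$; a bare diagonal pair is excluded by \eqref{neqeq155}; hence if $G(\pi)\neq\{0\}$ every non-singleton class lives in a single slot, the elements of the other slot are all singletons, so that coordinate is pinned by $f_i(n_j)=0$ (a nonzero polynomial), and the remaining coordinate is bounded by the one-variable instance of Lemma \ref{gp}, whose group is trivial by pairwise multiplicative independence. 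With those two clauses written out, your proof is complete and self-contained, and one could legitimately replace the bare citation to \cite{pt} by it.
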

\begin{proof}
See Theorem 1 in \cite{pt}.
\end{proof}
Now we are ready to prove our main results.
\section{Proof of Theorem \ref{thm2}}
Let $(U_n)$ be a linear recurrence sequence of order
$d$ as given in \eqref{eq6}, satisfying the assumptions of the statement. Let $T$ be the minimal set of places of $\Q(\alpha_1, \ldots, \alpha_q)$ such that $||\alpha_i||_v=1$ for all $i=1, \ldots, q$ and $v\not \in T$. Let $(x_1,\ldots,x_k)\in \mathbb{Z}^{k}$ be the solution of \eqref{normeq}. We need to bound the indices $(n_1,n_2)$ such that $U_{n_{1}}+U_{n_{2}}=x_i$ for some $i$ with $1\leq i\leq k$. By using the Proposition \ref{lem normeqsol} and \eqref{eq6}, the relation (\ref{eq6a}) becomes
\begin{equation}\label{2.5}
	c_{1i}\textbf{u}_{1}^{\textbf{a}}+\cdots+	c_{ki}\textbf{u}_{k}^{\textbf{a}}=f_1(n_1)\alpha_1^{n_1} +\cdots + f_q(n_1)\alpha_{q}^{n_1}+f_1(n_2)\alpha_1^{n_2} +\cdots + f_q(n_2)\alpha_{q}^{n_2},
\end{equation}
where $\textbf{a}=(a_1,\ldots,a_s)$ and $\textbf{u}_{i}=(u_{i1},\ldots,u_{is})$ for some $i=1,\ldots,k$. Note that by our convention on the minimality on $d$ in \eqref{eq4}, here none of $f_i(n_1),f_i(n_2)$ and $\alpha_i$ are zero. The $2q$ summands on the right hand side of (\ref{2.5}) will be parameterized by the symbols
$\overline{1},\ldots,\overline{q},\hat{1},\ldots,\hat{q}$. We may assume that $i$ is fixed and define the $(r+2)$-tuples $\mathfrak{g}_j$ as\\
$$ \mathfrak{g}_j= \left\{\
	 \begin{array}{lc}
(u_{j1},\ldots,u_{js},1,1),& 1\leq j\leq k,\\
(1,\ldots,1,\alpha_j,1),& \overline{1}\leq j\leq \overline{q},\\
(1,\ldots,1,1,\alpha_j),& \hat{1}\leq j\leq \hat{q}\\
\end{array}
\right.$$
and $$P_j(\textbf{y})= \left\{\
\begin{array}{lc}
	c_{ji},& 1\leq j\leq k,\\
	-f_j(y_{r+1}),& \overline{1}\leq j\leq \overline{q}\\
	-f_j(y_{r+2}),& \hat{1}\leq j\leq \hat{q}\\
\end{array}\right.$$
where $P_j\, (1\leq j\leq k, \bar{1}\leq j\leq \bar{q}, \hat{1}\leq j\leq \hat{q})$ is a polynomial in $r+2$ variables $\textbf{y}=(y_1,\ldots,y_{r+2})$. Then \eqref{2.5} can be rewritten as
\begin{equation}\label{2.55}
\sum_{j=1}^{k+2q}P_j(\textbf{y})\mathfrak{g}_j^{\textbf{y}}=0.
\end{equation}
Let $J=\{\ 1,\ldots,k,\overline{1},\ldots,\overline{q},\hat{1},\ldots,\hat{q}\}$ and $\pi$ be a partition of $J$. Consider the refinement of \eqref{2.55}
\begin{equation}\label{2.6}
\sum_{j\in \Delta}P_j(\textbf{y})\mathfrak{g}_j^{\textbf{y}}=0,\qquad(\Delta \in \pi).
\end{equation}
We are interested with the solutions of \eqref{2.6} which are not solutions of proper refinement of $\pi$. That is, we consider the solutions of  \eqref{2.6} in $S(\pi)$.
\par

    Assume first that there is a subset $\Delta$ of  $\pi$ such that $\Delta \subseteq \{\bar{1}, \ldots , \bar{q}\}$. If $\Delta$ is singleton set, that is, $\Delta = \{i\}$,  and $\pi$ yields $f_i(n_1)\alpha_i^{n_1} =0$, which is not possible as $\al_i \neq 0, \; (1 \leq i \leq q)$. So, $|\Delta| =1$ is not possible. Thus $|\Delta| \geq 2$. If  we had $i \overset{\pi}{\sim} j$ with some $i\neq j$ then this yields $\alpha_i^{n_1}  = -\alpha_j^{n_1}$.  Since the recurrence sequence $(U_n)_{n\geq 0}$ is non-degenerate, we have $G(\pi) = \{\bf{0}\}$.  Hence, by Lemma \ref{gp} we get an upper bound for the number of these values of $n_1$ in terms of $d$.

Similarly, if we assume there is a subset $\Delta$ of  $\pi$ such that $\Delta \subseteq \{\hat{1}, \ldots, \hat{q}\}$, then by proceeding as above  we get an upper bound for the number of these values of $n_2$ in terms of $d$.

Now assume that $\Delta \subseteq \{\bar{1}, \ldots, \bar{q}, \hat{1}, \ldots, \hat{q}\}$ such that $i \in \{\bar{1}, \ldots, \bar{q}\}$ and  $j\in \{\hat{1}, \ldots, \hat{q}\}$. Suppose $\Delta = \{i, j\}$ with $i\neq j$. In this case we will get an equation of the form
\begin{equation}
f_{i}(n_1)\alpha_{i}^{n_1} +f_{j}(n_2)\alpha_{j}^{n_2}=0.
\end{equation}
Since by our assumption, $\alpha_i$'s are pair wise multiplicatively independent, we get $G(\pi) = \{\bf{0}\}$. Hence, by Lemma \ref{gp} we get an upper bound for the number of these values of $n_1, n_2$ in terms of $d$. Next suppose that $\Delta = \{i, i\}$. Further, if $\al_{i}^{n_1} + \al_{i}^{n_2} \neq 0$, then by Lemma \ref{lem3.4}, we get an upper bound for the number of these values of $n_1, n_2$ in terms of $d$. On the other hand if $\al_{i}^{n_1} + \al_{i}^{n_2} = 0$, then $\al_i$ will be a root of unity, which is a contradiction.

Now we consider $\Delta\in \pi$ which contains an index from $\{\overline{1},\ldots,\overline{q},\hat
1,\ldots \hat{q} \}$ and assume that there is a $j_2\in \Delta$ with $1\leq j_2\leq k$ such that $P_{j_{2}}$ is not identically zero. Suppose $a_d\neq \pm 1$. There arises two cases: 
\begin{enumerate}
	\item[(I)] If $j_1,j_3\in \Delta$ such that $\overline{1}\leq j_1\leq \overline{q}$ and $\hat{1}\leq j_3\leq \hat{q}$. If $\textbf{y}\in G(\pi)$, then as $j_1\overset{\pi}{\sim}j_2$ and $j_3\overset{\pi}{\sim}j_2$, we have
	\begin{equation}\label{2.7}
		 u_{j_{2}1}^{y_1}\cdots u_{j_{2}r}^{y_r}=\alpha_{j_1}^{y_{r+1}} \qquad \text{and}\qquad u_{j_{2}1}^{y_1}\cdots u_{j_{2}r}^{y_r}=\alpha_{j_3}^{y_{r+2}}
	\end{equation} respectively.
	Since $\alpha_{j_1}$ and $\alpha_{j_3}$  is not a unit in $\tau_{j_2}(K)$, we get $y_{r+1}=0$ and $y_{r+2}=0$. Then, as $u_{j_{2}1}^{y_1}\cdots u_{j_{2}r}^{y_r}$ is a system of fundamental units in $\tau_{j_2}(K)$, we obtain $y_1=\cdots=y_r=0$. Hence $\textbf{y}=\bf 0$, so $G(\pi)=\{{\bf 0}\}.$
\item [(II)] Suppose $j_1\in \Delta$ with $\overline{1}\leq j_1\leq \overline{q}$ and no other index in $\{ \hat{1},\ldots,\hat{d}\}$ belongs to $\Delta$.  If $\textbf{y}\in G(\pi)$, then as $j_1\overset{\pi}{\sim}j_2$, we have $u_{j_{21}}^{y_1}\cdots u_{j_{2r}}^{y_r}=\alpha_{j_1}^{y_{r+1}}$. Since $\alpha_{j_1}$  is not a unit in $\tau_{j_2}(K)$ we get $y_{r+1}=0$. Then, as $u_{j_{2}1}^{y_1}\cdots u_{j_{2}r}^{y_r}$ is a system of fundamental units in $\tau_{j_2}(K)$, we obtain $y_1=\cdots=y_r=0$. Hence $\textbf{y}=\bf 0$, so $G(\pi)=\{{\bf 0}\}.$
\end{enumerate}
Thus, by Lemma \ref{gp} our statement follows.
%\par
%Assume that $(ii)$ holds. In the cases  discussed above, we consider case(I). Taking the inverse $\tau_{j_2}^{-1}$ of $\tau_{j_{2}}$, (\ref{2.7}) yields 
%\begin{equation}\label{2.8}
%	\epsilon_1^{y_1}\cdots\epsilon_r^{y_r}=\alpha_{t_1}^{r+1} \qquad \text{and} \qquad \epsilon_1^{y_1}\cdots\epsilon_r^{y_r}=\alpha_{t_3}^{r+2}
%\end{equation}
%where $\alpha_{t_1}=\tau_{j_2}^{-1}(\alpha_{j_{1}})$ and $\alpha_{t_3}=\tau_{j_2}^{-1}(\alpha_{j_{3}})$. It is clear that $\alpha_{t_1}$ and $\alpha_{t_3}$ are algebraic conjugate of $\alpha_{j_{1}}$ and $\alpha_{j_{3}}$ respectively, so it is a root of 
%$f(x)$ in (\ref{eq5}). Thus
% by $(ii)$, $\alpha_{t_1}^{y_{r+1}}$ and  $\alpha_{t_3}^{y_{r+2}}$ are equal to $\pm 1$. Since $\alpha_{j_1}$ and  $\alpha_{j_3}$ is not a root of unity, they are not equal to $\pm 1$. By the non-degenerate property of $(U_n)$,
% $\alpha_{t_1}$ as well as $\alpha_{t_3}$  cannot be root of unity, this give $y_{r+1}=y_{r+2}=0$. Then as $\epsilon_1^{y_1}\cdots\epsilon_r^{y_r}$ form a system of fundamental units in $K$, we get $y_1=\cdots=y_r=0$. So $\textbf{y}=0$, and $G(\pi)=\{ 0\}$.\\
% Similarly, we can prove in case (II) also that $G(\pi)=\{ 0\}$.

\section{Proof of Theorem \ref{thm1}} 
Let $w_1,\ldots, w_t\in U_S$ satisfy (\ref{snormeq}) and (\ref{nonzerow}). Assume  that $$w_1+\cdots +w_t=x_i$$ for some $x_i\in X_i (i=1,\ldots,k)$. By Proposition \ref{lem normeqsol}, we have 
\begin{equation}\label{thm eq1}
	w_1+\cdots+w_t= F_{i}^{(j)}(\textbf{n})
\end{equation}
for some $j\in\{1,\ldots,J\}$ and $\textbf{n}\in \mathbb{N}_0^{s}$, where $J$ is bounded in terms of $k$ and $m$ and  $F_{i}^{(j)}(\textbf{n})$ is a term of a non-degenerate multi-recurrence $F_{i}^{(j)}$. So by Proposition \ref{prop1}, the number of such $\textbf{n}$ satisfying (\ref{thm eq1}) is $C_4$. Further by \eqref{nonzerow}, $F_{i}^{(j)}(\textbf{n})\neq 0$. Setting $b_q=1/F_{i}^{(j)}(\textbf{n})$ for $1\leq q\leq t$, \eqref{thm eq1} can be written as
\begin{equation}\label{eq5.5}
	b_1w_1+\cdots+b_tw_t=1.
\end{equation}
Then from \eqref{nonzerow}  and Proposition \ref{lem uniteq}, there are finitely many tuples $(w_1, \ldots, w_t)\in U_{S}^t$ satisfying  \eqref{eq5.5}. Thus, the result follows as the number of the above type equations appearing is at most $C_5$.

%{\bf Data Availability Statements} Data sharing not applicable to this article as no datasets were generated or analyzed during the current study.

{\bf Acknowledgment:}  Both author's work is supported by a grant from Science and Engineering Research Board (SERB)(File No.:CRG/2022/000268) and S.S.R. is supported by a grant from National Board for Higher Mathematics (NBHM), Sanction Order No: 14053.

\end{document}